\newtheorem{theorem}{Theorem}[section]
\newtheorem{lemma}[theorem]{Lemma}
\newtheorem{proposition}[theorem]{Proposition}
\theoremstyle{definition}
\theoremstyle{remark}
\numberwithin{equation}{section}
\begin{document}

\setcounter{page}{1}

\title[Local and 2-local derivations of   locally finite simple Lie algebras]{Local and 2-local derivations of  locally finite simple Lie algebras}

\author[ Ayupov Sh.A., Kudaybergenov K.K., Yusupov B.B. ]{Shavkat Ayupov$^{1,3}$, Karimbergen Kudaybergenov$^2$, Bakhtiyor Yusupov$^3$}
\address{$^1$ V.I.Romanovskiy Institute of Mathematics\\
  Uzbekistan Academy of Sciences, 81 \\ Mirzo Ulughbek street, 100170  \\
  Tashkent,   Uzbekistan}
 \address{$^2$ Department of Mathematics, Karakalpak State University, 1, Academician  Ch.~Abdirov street, 230113, Nukus,  Uzbekistan}
\address{$^3$ National University of Uzbekistan, 4, University street, 100174, Tashkent, Uzbekistan}
\email{\textcolor[rgb]{0.00,0.00,0.84}{sh$_{-}$ayupov@mail.ru, shavkat.ayupov@mathinst.uz}}

\email{\textcolor[rgb]{0.00,0.00,0.84}{karim2006@mail.ru}}
\email{\textcolor[rgb]{0.00,0.00,0.84}{baxtiyor\_yusupov\_93@mail.ru}}
\maketitle

\begin{abstract} In the present paper we study local and 2-local derivations of locally finite split simple Lie algebras. Namely, we show that every local and 2-local derivation on such   Lie algebra is a derivation.

\end{abstract}
{\it Keywords:} Lie algebras, locally finite simple Lie algebras, derivation,  local derivation, 2-local derivation.
\\

{\it AMS Subject Classification:} 17B65, 17B20, 16W25.

\section{Introduction}

The notion of local derivation were first introduced in 1990 by R.V.Kadison \cite{Kadison} and D.R.Larson, A.R.Sourour \cite{Larson}.
A linear operator $\Delta$ on an algebra $\mathcal{A}$ is called a \textit{local derivation} if given any $x\in\mathcal{A}$ there exists a derivation $D_x$(depending on $x$) such that $\Delta(x)=D_x(x).$ The main problems concerning this notion are to find conditions under which local derivations become derivations and to present examples of algebras with local derivations that are not derivations.
R.V.Kadison proved that each continuous local
derivation of a von Neumann algebra $M$ into a dual Banach $M$-bimodule is a derivation.

Investigation of local and 2-local derivations on finite dimensional Lie algebras were initiated in papers \cite{Ayupov7, AyuKudRak}.  In \cite{Ayupov7} the first two authors 
have proved that every local
derivation on semi-simple Lie algebras is a derivation and gave examples of nilpotent
finite-dimensional Lie algebras with local derivations which are not derivations. In \cite{Ayupov6} local derivations of solvable Lie algebras are investigated and it is shown that any local derivation of solvable Lie algebra with model nilradical
is a derivation.

In 1997, P.\v{S}emrl \cite{Sem} introduced the notion of  2-local derivations and
2-local automorphisms on algebras. Namely, a map \(\nabla : \mathcal{A} \to
\mathcal{A}\) (not necessarily linear) on an algebra \(\mathcal{A}\) is called a \textit{2-local
derivation}, if for every pair of elements \(x,y \in  \mathcal{A}\) there exists a
derivation \(D_{x,y} : \mathcal{A} \to \mathcal{A}\) such that
\(D_{x,y} (x) = \nabla(x)\) and \(D_{x,y}(y) = \nabla(y).\) The notion of 2-local automorphism is given in a similar way.  For a
given algebra \(\mathcal{A}\), the main problem concerning
these notions is to prove that they automatically become a
derivation (respectively, an automorphism) or to give examples of
local and 2-local derivations or automorphisms of \(\mathcal{A},\)
which are not derivations or automorphisms, respectively.

Solution
of such problems for finite-dimensional Lie algebras over
algebraically closed field of zero characteristic were obtained in
\cite{AyuKud, AyuKudRak, ChenWang}. Namely, in
\cite{AyuKudRak} it was proved that every 2-local derivation on a
semi-simple Lie algebra \(\mathcal{L}\) is a derivation and that
each finite-dimensional nilpotent Lie algebra, with dimension
larger than two admits 2-local derivation which is not a
derivation.  
Concerning 2-local
automorphism, Z.Chen and D.Wang in \cite{ChenWang} proved that if \(\mathcal{L}\) is
a simple Lie algebra of type $A_{l},D_{l}$ or $E_{k}, (k = 6, 7,
8)$ over an algebraically closed field of characteristic zero
then every 2-local automorphism of \(\mathcal{L}\) is an automorphism. Finally,
in \cite{AyuKud} it was  proved that every 2-local automorphism of a
finite-dimensional semi-simple Lie algebra over an algebraically
closed field of characteristic zero is an automorphism. Moreover,
have shown also that every nilpotent Lie algebra with finite
dimension larger than two admits 2-local automorphisms which is
not an automorphism.

In \cite{Ayupov8, AyuYus} the authors studied 2-local derivations of infinite-dimensional Lie algebras over a field of characteristic zero and proved  that all 2-local derivations of the Witt algebra as well as of the positive Witt algebra and the classical one-sided Witt algebra are (global) derivations and every 2-local derivation on Virasoro algebras is a derivation. In \cite{AyuKudYus} we have proved that every 2-local derivation on the generalized Witt algebra $W_n(\mathbb{F})$ over the vector space $\mathbb{F}^n$  is a derivation.
In \cite{YangKai} Y.Chen, K.Zhao and Y.Zhao studied local derivations on generalized Witt algebras. They proved that every local derivation on Witt algebras is a derivation and that every local derivation on a centerless generalized Virasoro algebra of higher rank is a derivation.

In the present paper we study local and 2-local derivations of  locally finite split simple Lie algebras.

\section{Preliminaries}

In this section we give some necessary definitions and preliminary results (for details see \cite{Neeb2005, Neeb2001}).

 A Lie algebra $\mathfrak{g}$ over a field $\mathbb{F}$ is a vector space over $\mathbb{F}$ with a bilinear
mapping $\mathfrak{g}\times\mathfrak{g}\rightarrow\mathfrak{g}$ denoted $(x,y)\mapsto[x,y]$ and called the bracket of $\mathfrak{g}$ and satisfying:
$$
[x,x] =0 ,\ \ \ \  \forall x\in\mathfrak{g},
$$
$$
[[x,y],z]+[[y,z],x]+[[z,x],y]=0, \forall x,y,z\in\mathfrak{g}.
$$

A Lie algebra $\mathfrak{g}$ is said
to be \textit{solvable} if $\mathfrak{g}^{(k)}=\{0\}$ for some
integer $k,$ where $\mathfrak{g}^{(0)}=\mathfrak{g},$
$\mathfrak{g}^{(k)}=\Big[\mathfrak{g}^{(k-1)}, \mathfrak{g}^{(k-1)}\Big],\,
k\geq1.$ Any Lie algebra $\mathfrak{g}$ contains a unique maximal
solvable ideal, called the radical of $\mathfrak{g}$ and denoted by
$\mbox{Rad} \mathfrak{g}.$ A non trivial Lie algebra $\mathfrak{g}$
is called \textit{semisimple} if $\mbox{Rad} \mathfrak{g}=0.$ That
is equivalent to requiring that $\mathfrak{g}$ have no nonzero
abelian ideals. A Lie algebra $\mathfrak{g}$ is simple, if it has no non-trivial ideals and is not abelian.

We say that a Lie algebra $\mathfrak{g}$ has a \textit{root decomposition} with respect to an abelian subalgebra $\mathfrak{h},$ if
$$
\mathfrak{g}=\mathfrak{h}\oplus\bigoplus\limits_{\alpha\in \mathfrak{R}}\mathfrak{g}_{\alpha},
$$
where $\mathfrak{g}_{\alpha}=\Big\{x\in\mathfrak{g}: [h,x]=\alpha(h)x,\,\, \forall h\in\mathfrak{h}\Big\}$
and $\mathfrak{R}=\Big\{\alpha\in\mathfrak{h}^*\backslash0:\mathfrak{g}_{\alpha}\neq\{0\}\Big\}$
 is the corresponding root system and $\mathfrak{h}^*$ is the space of all linear functionals on $\mathfrak{h}.$
In this case, $\mathfrak{h}$ is called \textit{splitting Cartan subalgebra} of $\mathfrak{g},$ and $\mathfrak{g}$
respectively the pair $(\mathfrak{g},\mathfrak{h})$ is called \textit{split} Lie algebra.

Suppose that  $\mathfrak{g}$ is a Lie algebra over $\mathbb{F}$ which is a directed
union of  finite-dimensional simple Lie algebras, that is,  $\mathfrak{g}=\lim\limits_{\longrightarrow}\mathfrak{g}_\alpha$
is the direct limit of a family $(\mathfrak{g}_\alpha)_{\alpha\in A}$ of finite-dimensional simple Lie algebras $\mathfrak{g}_\alpha$
which are subalgebras of $\mathfrak{g}$ and the directed order $\leq$ on the index set $A$ is given
by $\alpha\leq \beta$ if $\mathfrak{g}_\alpha\leq \mathfrak{g}_\beta.$
A Lie algebra $\mathfrak{g}$  of this form  is said to be  locally finite simple Lie algebra.

 Now following \cite{Neeb2005} we give a description of locally
finite split simple Lie algebras.

For a set $\mathfrak{J}$  we denote by $M_\mathfrak{J}(\mathbb{F})=\mathbb{F}^{\mathfrak{J}\times\mathfrak{J}}$
 the set of all 
 $\mathfrak{J}\times\mathfrak{J}$-matrices with entries in $\mathbb{F}.$
 Let   $M_\mathfrak{J}(\mathbb{F})_{rc-fin}\subseteq M_\mathfrak{J}(\mathbb{F})$ be 
the set of all $\mathfrak{J}\times\mathfrak{J}$-matrices
  with at most finitely many non-zero entries in each row and each column, and let $\mathfrak{gl}_{\mathfrak{J}}(\mathbb{F})$  denote the subspace
   consisting of all matrices with at most finitely many non-zero entries.
The matrix product $xy$  is defined if at least one factor is in $\mathfrak{gl}_{\mathfrak{J}}(\mathbb{F})$ and the other
is in $M_\mathfrak{J}(\mathbb{F}).$
In particular, $\mathfrak{gl}_{\mathfrak{J}}(\mathbb{F})$ thus inherits the structure of locally finite Lie algebra via $[x,y]:=xy-yx$ and
\begin{equation*}\begin{split}
\mathfrak{sl}_{\mathfrak{J}}(\mathbb{F})=\left\{x\in \mathfrak{gl}_{\mathfrak{J}}(\mathbb{F}): tr(x)=0\right\}
\end{split}\end{equation*}
is a simple Lie algebra.

For $i,j\in \mathfrak{J}$ denote  by $e_{i,j}$ a matrix unit defined as 
$$
e_{i,j}:\mathfrak{J}\times\mathfrak{J}\rightarrow\mathbb{F},\ \ (k,s)\longmapsto\delta_{ik}\delta_{sj},
$$
where $\delta_{i,j}$ is the Kronecker symbol.

Set  $2\mathfrak{J}:=\mathfrak{J}\,\dot{\cup}-\mathfrak{J},$ where $-\mathfrak{J}$ denotes a copy of $\mathfrak{J}$ whose elements are denoted by $-i\,(i\in\mathfrak{J})$ and consider the $2\mathfrak{J}\times2\mathfrak{J}$-matrices
\begin{equation*}
q_1=\sum\limits_{i\in\mathfrak{J}}e_{i,-i}+e_{-i,i}\ \ \text{and} \ \ q_2=\sum\limits_{i\in\mathfrak{J}}e_{i,-i}-e_{-i,i}.
\end{equation*}
Set
\begin{equation*}
\mathfrak{o}_{\mathfrak{J},\mathfrak{J}}(\mathbb{F})=\left\{x\in\mathfrak{gl}_{2\mathfrak{J}}(\mathbb{F}):\ x^{\top} q_1+q_1x=0\right\}
\end{equation*}
and
\begin{equation*}
\mathfrak{sp}_{\mathfrak{J}}(\mathbb{F})=\left\{x\in\mathfrak{gl}_{2\mathfrak{J}}(\mathbb{F}):\ x^{\top} q_2+q_2x=0\right\}.
\end{equation*}

By \cite[Theorem IV.6]{Neeb2001} 
every infinite dimensional locally finite split simple Lie algebra is isomorphic
to one of the Lie algebras $\mathfrak{sl}_{\mathfrak{J}}(\mathbb{F}), \mathfrak{o}_{\mathfrak{J},\mathfrak{J}}(\mathbb{F}), \mathfrak{sp}_{\mathfrak{J}}(\mathbb{F}),$ where $\mathfrak{J}$ is an infinite set with
$\textrm{card}\mathfrak{J} = \dim\mathfrak{g}.$

In the next section we shall use the following description of algebras of derivations of locally finite simple Lie algebras \cite[Theorem I.3]{Neeb2005}:
\begin{equation*}
\begin{split}
der\left(\mathfrak{sl}_{\mathfrak{J}}(\mathbb{F}\right) & \cong M_\mathfrak{J}(\mathbb{F})_{rc-fin}/\mathbb{F}\mathbf{1}\\
der\left(\mathfrak{o}_{\mathfrak{J},\mathfrak{J}}(\mathbb{F})\right) & \cong \left\{x\in M_{\mathfrak{J}}(\mathbb{F})_{rc-fin}:x^\top q_1+q_1x=0\right\}\\
der\left(\mathfrak{sp}_\mathfrak{J}(\mathbb(F))\right) & \cong \left\{x\in M_{\mathfrak{J}}(\mathbb{F})_{rc-fin}:x^\top q_2+q_2x=0\right\},
\end{split}
\end{equation*}
where  $\mathbf{1}=(\delta_{ij})$ is the indentity matrix in
$M_\mathfrak{J}(\mathbb{F}).$ In particular, any derivation $D$ on $\mathfrak{sl}_{\mathfrak{J}}(\mathbb{F})$ represented as
\begin{equation}\label{dersplit}
D(x)=[a,x],\,\, x\in \mathfrak{sl}_{\mathfrak{J}}(\mathbb{F}), 
\end{equation}
where $a\in M_\mathfrak{J}(\mathbb{F})_{rc-fin}.$ Further, in the cases of algebras $\mathfrak{o}_{\mathfrak{J},\mathfrak{J}}(\mathbb{F})$ and  $\mathfrak{sp}_{\mathfrak{J}}(\mathbb{F})$ an element $a\in M_\mathfrak{J}(\mathbb{F})_{rc-fin}$ need to satisfy conditions  
$a^\top q_1+q_1 a=0$ and $a^\top q_2+q_2 a=0,$ respectively.

\section{Main results}

\subsection{Local derivation  on the locally finite split simple Lie algebras}

\
\medskip

The main result of this subsection is given as follows.
\begin{theorem}\label{th21}
Let $\mathfrak{g}$ be a locally
finite split simple Lie algebras over a field of characteristic zero. Then any
local derivation on $\mathfrak{g}$ is a derivation.
\end{theorem}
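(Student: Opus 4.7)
The plan is to reduce the theorem to the known finite-dimensional result \cite{Ayupov7} that every local derivation on a finite-dimensional semisimple Lie algebra is a derivation, by exploiting the directed-union presentation $\mathfrak{g}=\lim\limits_{\longrightarrow}\mathfrak{g}_{\alpha}$. I focus on $\mathfrak{g}=\mathfrak{sl}_{\mathfrak{J}}(\mathbb{F})$; the two remaining cases $\mathfrak{o}_{\mathfrak{J},\mathfrak{J}}(\mathbb{F})$ and $\mathfrak{sp}_{\mathfrak{J}}(\mathbb{F})$ are treated analogously using the corresponding finite-dimensional classical subalgebras. The goal is to produce a matrix $a\in M_{\mathfrak{J}}(\mathbb{F})_{rc-fin}$ such that $\Delta(x)=[a,x]$ for every $x\in\mathfrak{g}$, thus realising $\Delta$ as the derivation given by \eqref{dersplit}.

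For each finite subset $\mathfrak{J}_{0}\subseteq\mathfrak{J}$ with $|\mathfrak{J}_{0}|\geq 2$, let $\mathfrak{g}_{\mathfrak{J}_{0}}:=\mathfrak{sl}_{\mathfrak{J}_{0}}(\mathbb{F})$ and let $p_{\mathfrak{J}_{0}}:M_{\mathfrak{J}}(\mathbb{F})\to M_{\mathfrak{J}_{0}}(\mathbb{F})$ denote the restriction of matrices to $\mathfrak{J}_{0}\times\mathfrak{J}_{0}$. A short matrix computation, using that $x\in\mathfrak{g}_{\mathfrak{J}_{0}}$ is supported on $\mathfrak{J}_{0}\times\mathfrak{J}_{0}$, yields the key identity $p_{\mathfrak{J}_{0}}\bigl([b,x]\bigr)=\bigl[p_{\mathfrak{J}_{0}}(b),x\bigr]$ for every $b\in M_{\mathfrak{J}}(\mathbb{F})_{rc-fin}$. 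Combined with the representation $\Delta(x)=[a_{x},x]$ arising from \eqref{dersplit} and the local-derivation hypothesis, this shows that $\Delta_{\mathfrak{J}_{0}}:=p_{\mathfrak{J}_{0}}\circ\Delta\big|_{\mathfrak{g}_{\mathfrak{J}_{0}}}$ is itself a local derivation of the finite-dimensional simple algebra $\mathfrak{g}_{\mathfrak{J}_{0}}$. By \cite{Ayupov7}, $\Delta_{\mathfrak{J}_{0}}$ is a derivation, so $\Delta_{\mathfrak{J}_{0}}=\mathrm{ad}(c_{\mathfrak{J}_{0}})$ for some $c_{\mathfrak{J}_{0}}\in M_{\mathfrak{J}_{0}}(\mathbb{F})$, unique modulo $\mathbb{F}\cdot\mathbf{1}_{\mathfrak{J}_{0}}$.

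The next step patches these local data into a global matrix. For $\mathfrak{J}_{0}\subseteq\mathfrak{J}_{1}$ finite, applying $p_{\mathfrak{J}_{0}}$ to $\Delta_{\mathfrak{J}_{1}}(x)=[c_{\mathfrak{J}_{1}},x]$ for $x\in\mathfrak{g}_{\mathfrak{J}_{0}}$ forces $c_{\mathfrak{J}_{0}}-p_{\mathfrak{J}_{0}}(c_{\mathfrak{J}_{1}})$ to centralise $\mathfrak{sl}_{\mathfrak{J}_{0}}$, hence to be scalar. Fixing a reference index $i_{0}\in\mathfrak{J}$ and demanding that the $(i_{0},i_{0})$-entry of each $c_{\mathfrak{J}_{0}}$ (with $i_{0}\in\mathfrak{J}_{0}$) vanish removes this ambiguity and delivers the exact compatibility $p_{\mathfrak{J}_{0}}(c_{\mathfrak{J}_{1}})=c_{\mathfrak{J}_{0}}$, so a single matrix $a\in M_{\mathfrak{J}}(\mathbb{F})$ is well defined by $a|_{\mathfrak{J}_{0}\times\mathfrak{J}_{0}}=c_{\mathfrak{J}_{0}}$. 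Row-column finiteness of $a$ follows from the explicit formula
\[
[c_{\mathfrak{J}_{0}},e_{i,k}]=\sum_{j\in\mathfrak{J}_{0}}(c_{\mathfrak{J}_{0}})_{j,i}\,e_{j,k}-\sum_{l\in\mathfrak{J}_{0}}(c_{\mathfrak{J}_{0}})_{k,l}\,e_{i,l},
\]
which reads the $i$-th column and $k$-th row of $c_{\mathfrak{J}_{0}}$ off the fixed finitely supported matrix $\Delta(e_{i,k})$.

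Finally, to verify that $\Delta(x)=[a,x]$ for every $x\in\mathfrak{g}$, I take $x\in\mathfrak{g}_{\mathfrak{J}_{0}}$ and, for each entry position $(i,j)$, choose $\mathfrak{J}_{1}\supseteq\mathfrak{J}_{0}\cup\{i,j\}$. A brief case analysis on whether $i$ or $j$ lies in $\mathfrak{J}_{0}$ — using that the support of $x$ restricts the summation in $[a,x]_{i,j}$ to indices in $\mathfrak{J}_{0}$ — gives $[a,x]_{i,j}=[c_{\mathfrak{J}_{1}},x]_{i,j}=\Delta_{\mathfrak{J}_{1}}(x)_{i,j}=\Delta(x)_{i,j}$, the outside-$\mathfrak{J}_{0}$ entries being zero on both sides. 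The main obstacle is precisely this final check: the global $a$ carries entries outside every fixed block $\mathfrak{J}_{0}\times\mathfrak{J}_{0}$, and one must see simultaneously that they are consistent under the patching, that $a$ remains row-column finite, and that the entry-wise equality $[a,x]=\Delta(x)$ extends beyond the diagonal block. In the $\mathfrak{o}$- and $\mathfrak{sp}$-cases the same scheme applies, with the normalization step adjusted so that the constructed $a$ satisfies the respective orthogonality condition $a^{\top}q_{i}+q_{i}a=0$.
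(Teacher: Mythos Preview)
Your proof is correct, and it shares its opening moves with the paper: the projection $p_{\mathfrak{J}_{0}}$ is the paper's $\pi_{\mathfrak{I}}$, your key identity $p_{\mathfrak{J}_{0}}([b,x])=[p_{\mathfrak{J}_{0}}(b),x]$ is Lemma~\ref{123}, and the observation that $\Delta_{\mathfrak{J}_{0}}$ is a local derivation of the finite-dimensional simple block is Lemma~\ref{resder}. From this common platform, however, the two arguments diverge sharply.

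The paper never constructs the global matrix $a$. Instead, given $x,y\in\mathfrak{g}$, it simply chooses a finite $\mathfrak{I}$ large enough that all five elements $x,\,y,\,\Delta(x),\,\Delta(y),\,\Delta([x,y])$ already lie in $\mathfrak{g}_{\mathfrak{I}}$; on such a block the projection $\pi_{\mathfrak{I}}$ acts as the identity on these elements, so the Leibniz rule for $\Delta_{\mathfrak{I}}$ (which is a derivation by \cite{Ayupov7}) transfers verbatim to $\Delta$. That is the entire proof: three lines after the two lemmata. Your route instead patches the block implementers $c_{\mathfrak{J}_{0}}$ into a single $a\in M_{\mathfrak{J}}(\mathbb{F})_{rc-fin}$ via a normalization and a compatibility check, then verifies row-column finiteness and the entry-wise equality $\Delta(x)=[a,x]$. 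This is substantially more work, and the ``main obstacle'' you flag (controlling entries of $[a,x]$ outside the supporting block) is a genuine verification your approach must carry out but the paper's approach sidesteps entirely. On the other hand, your argument yields a strictly stronger conclusion: it exhibits $\Delta$ explicitly in the form \eqref{dersplit}, recovering the description of $\mathrm{der}(\mathfrak{g})$ rather than merely the Leibniz identity.
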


For the proof  we need several Lemmata and from now on
$\mathfrak{g}$ is an one of  the algebras
 $\mathfrak{sl}_{\mathfrak{J}}(\mathbb{F}), \mathfrak{o}_{\mathfrak{J},\mathfrak{J}}(\mathbb{F}), \mathfrak{sp}_{\mathfrak{J}}(\mathbb{F})$ (see the end of the
previous Section).

Any  $x\in M_\mathfrak{J}(\mathbb{F})_{rc-fin}$ can be uniquely represented as
$$
x=\sum\limits_{i,j\in \mathfrak{J}} x_{i,j}e_{i,j},
$$
where $x_{i,j}\in \mathbb{F}$ for all $i,j\in  \mathfrak{J}.$

For a  subset $\mathfrak{I} \subset \mathfrak{J}$ we shall identify the algebra $M_\mathfrak{I}(\mathbb{F})_{rc-fin}$ 
with the subalgebra in $M_\mathfrak{J}(\mathbb{F})_{rc-fin},$ consisting of elements of the form
$
x=\sum\limits_{i,j\in \mathfrak{I}} x_{i,j}e_{i,j},
$
where $x_{i,j}\in \mathbb{F}$ for all $i,j\in  \mathfrak{I}.$ 
Further, for a finite subset $\mathfrak{I} \subset \mathfrak{J}$
we define  a projection mapping $\pi_\mathfrak{I}:M_\mathfrak{J}(\mathbb{F})_{rc-fin}\rightarrow M_\mathfrak{I}(\mathbb{F})_{rc-fin}$ as follows
\begin{equation*}
\pi_\mathfrak{I}(x)=\sum\limits_{i,j\in \mathfrak{I}} x_{i,j}e_{i,j},
\end{equation*}
where $x=\sum\limits_{i,j\in \mathfrak{J}} x_{i,j}e_{i,j}.$

\begin{lemma}\label{123} 
$$
\pi_\mathfrak{I}\left([x,y]\right)=\left[\pi_\mathfrak{I}(x), \pi_\mathfrak{I}(y)\right]
$$
for all $x\in M_\mathfrak{J}(\mathbb{F})_{rc-fin}$ and $y\in M_\mathfrak{I}(\mathbb{F})_{rc-fin}.$
\end{lemma}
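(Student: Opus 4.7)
The plan is to unwind everything at the level of matrix entries and verify the identity coordinatewise. Write $x = \sum_{k,\ell \in \mathfrak{J}} x_{k,\ell}e_{k,\ell}$ and $y = \sum_{k,\ell \in \mathfrak{J}} y_{k,\ell}e_{k,\ell}$, with the crucial support condition $y_{k,\ell} = 0$ whenever $k \notin \mathfrak{I}$ or $\ell \notin \mathfrak{I}$. Since the bracket is $[x,y] = xy - yx$, I will handle the two products $xy$ and $yx$ separately and then subtract, relying on the linearity of $\pi_\mathfrak{I}$.

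First I would compute $(xy)_{i,j} = \sum_{k \in \mathfrak{J}} x_{i,k} y_{k,j}$ for a fixed pair $i,j \in \mathfrak{I}$. The $rc$-finiteness of $x$ and $y$ makes the sum finite, so there are no convergence issues. Since $y_{k,j} = 0$ unless $k \in \mathfrak{I}$, this sum collapses to $\sum_{k \in \mathfrak{I}} x_{i,k} y_{k,j}$, which by inspection equals $(\pi_\mathfrak{I}(x)\,\pi_\mathfrak{I}(y))_{i,j}$. This gives $\pi_\mathfrak{I}(xy) = \pi_\mathfrak{I}(x)\,\pi_\mathfrak{I}(y)$ whenever $y \in M_\mathfrak{I}(\mathbb{F})_{rc-fin}$. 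The same reasoning, applied to $(yx)_{i,j} = \sum_{k\in\mathfrak{J}} y_{i,k}x_{k,j}$, uses $y_{i,k}=0$ unless $k\in\mathfrak{I}$ to yield $\pi_\mathfrak{I}(yx) = \pi_\mathfrak{I}(y)\,\pi_\mathfrak{I}(x)$.

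Subtracting the two identities gives
\[
\pi_\mathfrak{I}([x,y]) \;=\; \pi_\mathfrak{I}(xy) - \pi_\mathfrak{I}(yx) \;=\; \pi_\mathfrak{I}(x)\,\pi_\mathfrak{I}(y) - \pi_\mathfrak{I}(y)\,\pi_\mathfrak{I}(x) \;=\; [\pi_\mathfrak{I}(x),\pi_\mathfrak{I}(y)],
\]
which is the claim. There is essentially no obstacle here: $\pi_\mathfrak{I}$ is not a homomorphism in general, but the asymmetry in the hypothesis (only $y$ is required to lie in $M_\mathfrak{I}(\mathbb{F})_{rc-fin}$) forces the summation indices to stay inside $\mathfrak{I}$, which is precisely what makes each of the two products respect the projection. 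The only mild subtlety worth flagging is that one should verify $[x,y]$ indeed lies in $M_\mathfrak{J}(\mathbb{F})_{rc-fin}$, which is immediate from the row-column finiteness of both factors, so that the left-hand side makes sense.
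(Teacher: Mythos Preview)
Your argument is correct and is essentially the same as the paper's: the paper carries out the identical computation using the block decomposition $x = x_{\mathfrak{I},\mathfrak{I}} + x_{\mathfrak{I},\mathfrak{K}} + x_{\mathfrak{K},\mathfrak{I}} + x_{\mathfrak{K},\mathfrak{K}}$ (with $\mathfrak{K} = \mathfrak{J}\setminus\mathfrak{I}$) and observes that the bracket of the three off-diagonal blocks with $y = y_{\mathfrak{I},\mathfrak{I}}$ lands outside $M_\mathfrak{I}(\mathbb{F})$, which is exactly your entrywise observation phrased in block form. Your version in fact records the slightly stronger intermediate fact $\pi_\mathfrak{I}(xy) = \pi_\mathfrak{I}(x)\,\pi_\mathfrak{I}(y)$ for $y\in M_\mathfrak{I}(\mathbb{F})_{rc\text{-}fin}$, but the substance is identical.
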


 \begin{proof}
Note that each matrix $x\in M_\mathfrak{J}(\mathbb{F})_{rc-fin}$ is represented as $x=x_{\mathfrak{I},\mathfrak{I}}+x_{\mathfrak{I},\mathfrak{K}}+x_{\mathfrak{K},\mathfrak{I}}+x_{\mathfrak{K}, \mathfrak{K}},$
where  $x_{\mathfrak{I},\mathfrak{I}}=\sum\limits_{i,j\in \mathfrak{I}} x_{i,j}e_{i,j},$ $x_{\mathfrak{I},\mathfrak{K}}=\sum\limits_{i\in \mathfrak{I}, j\in \mathfrak{K}} x_{i,j}e_{i,j},$
$x_{\mathfrak{K},\mathfrak{I}}=\sum\limits_{i\in \mathfrak{K}, j\in \mathfrak{I}} x_{i,j}e_{i,j},$
$x_{\mathfrak{K},\mathfrak{K}}=\sum\limits_{i,j\in \mathfrak{K}} x_{i,j}e_{i,j}$ and $\mathfrak{K}=\mathfrak{J}\setminus\mathfrak{I}.$
Take the matrices $x=x_{\mathfrak{I},\mathfrak{I}}+x_{\mathfrak{I},\mathfrak{K}}+x_{\mathfrak{K},\mathfrak{I}}+x_{\mathfrak{K}, \mathfrak{K}}\in M_\mathfrak{J}(\mathbb{F})_{rc-fin}$
and $y=y_{\mathfrak{I},\mathfrak{I}}\in M_\mathfrak{I}(\mathbb{F})_{rc-fin}.$
Then
 \begin{equation*}\begin{split}
\pi_\mathfrak{I}\left([x,y]\right)&=\pi_\mathfrak{I}\left(\left[x_{\mathfrak{I},\mathfrak{I}}+x_{\mathfrak{I},\mathfrak{K}}+x_{\mathfrak{K},\mathfrak{I}}+x_{\mathfrak{K}, \mathfrak{K}},
y_{\mathfrak{I},\mathfrak{I}}\right]\right)\\
&=
\pi_\mathfrak{I}\left([x_{\mathfrak{I},\mathfrak{I}}, y_{\mathfrak{I},\mathfrak{I}}]\right)+\pi_\mathfrak{I}\left(\left[x_{\mathfrak{I},\mathfrak{K}}+x_{\mathfrak{K},\mathfrak{I}}+x_{\mathfrak{K}, \mathfrak{K}},
y_{\mathfrak{I},\mathfrak{I}}\right]\right)\\
&=[x_{\mathfrak{I},\mathfrak{I}}, y_{\mathfrak{I},\mathfrak{I}}]
=
\left[\pi_\mathfrak{I}(x), \pi_\mathfrak{I}(y)\right].
  \end{split}\end{equation*}
 \end{proof}

For a  subset $\mathfrak{I} \subset \mathfrak{J}$ denote by $\mathfrak{g}_\mathfrak{I}$ the subalgebra in $\mathfrak{g}$ consisting of elements of the form
$
x=\sum\limits_{i,j\in \mathfrak{I}} x_{i,j}e_{i,j}\in \mathfrak{g},
$
where $x_{i,j}\in \mathbb{F}$ for all $i,j\in  \mathfrak{I}.$

It is clear that the restriction $\pi_\mathfrak{I}|_{\mathfrak{g}}$ of $\pi_\mathfrak{I}$ on $\mathfrak{g}$ maps $\mathfrak{g}$ onto $\mathfrak{g}_\mathfrak{I}.$

\begin{lemma}\label{resder}
Let $\Delta$ be a local derivation on $\mathfrak{g}.$ Then the mapping $\Delta_\mathfrak{I}$ on $\mathfrak{g}_\mathfrak{I}$ defined as
$$
\Delta_\mathfrak{I}(x)=\pi_\mathfrak{I}(\Delta(x)),\  x\in\mathfrak{g}_\mathfrak{I}
$$
is a local derivation for all finite subset $\mathfrak{I}$ in $\mathfrak{J}.$
\end{lemma}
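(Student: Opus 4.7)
The plan is to unpack the definition of a local derivation: fix $x \in \mathfrak{g}_\mathfrak{I}$ and exhibit a genuine derivation $D$ of $\mathfrak{g}_\mathfrak{I}$ that agrees with $\Delta_\mathfrak{I}$ at $x$. Since $x$ lies in $\mathfrak{g}_\mathfrak{I} \subset \mathfrak{g}$ and $\Delta$ is a local derivation on $\mathfrak{g}$, there is a derivation $D_x$ of $\mathfrak{g}$ with $\Delta(x) = D_x(x)$. By the description of $der(\mathfrak{g})$ recalled in the preliminaries, $D_x$ is inner up to a choice of implementing matrix: there exists $a \in M_\mathfrak{J}(\mathbb{F})_{rc-fin}$ (satisfying the orthogonal or symplectic condition in the $\mathfrak{o}$/$\mathfrak{sp}$ cases) such that $D_x(y) = [a,y]$ for every $y \in \mathfrak{g}$.

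Next I would apply Lemma \ref{123} to rewrite $\Delta_\mathfrak{I}(x)$:
\begin{equation*}
\Delta_\mathfrak{I}(x) = \pi_\mathfrak{I}(\Delta(x)) = \pi_\mathfrak{I}([a,x]) = [\pi_\mathfrak{I}(a), \pi_\mathfrak{I}(x)] = [\pi_\mathfrak{I}(a), x],
\end{equation*}
where the last equality uses that $x \in \mathfrak{g}_\mathfrak{I}$ is already supported inside $\mathfrak{I}\times\mathfrak{I}$. The candidate derivation is then $D := \mathrm{ad}_{\pi_\mathfrak{I}(a)}$ acting on $\mathfrak{g}_\mathfrak{I}$.

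The remaining task, and the main point requiring care, is to verify that $D$ really does map $\mathfrak{g}_\mathfrak{I}$ into itself and is a derivation of $\mathfrak{g}_\mathfrak{I}$. For the $\mathfrak{sl}_\mathfrak{J}(\mathbb{F})$ case this is immediate: $\pi_\mathfrak{I}(a)$ is a finite matrix indexed by $\mathfrak{I}$, so $[\pi_\mathfrak{I}(a), \cdot]$ is an inner derivation of $\mathfrak{gl}_\mathfrak{I}(\mathbb{F})$ that preserves the trace-zero subalgebra $\mathfrak{sl}_\mathfrak{I}(\mathbb{F}) = \mathfrak{g}_\mathfrak{I}$. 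For $\mathfrak{o}_{\mathfrak{J},\mathfrak{J}}(\mathbb{F})$ and $\mathfrak{sp}_\mathfrak{J}(\mathbb{F})$ one reads the finite subset as a symmetric subset of $2\mathfrak{J}$ (closed under $i \mapsto -i$), so that $q_1$ and $q_2$ restrict to $\mathfrak{I}\times\mathfrak{I}$; then one checks directly from $a^\top q_k + q_k a = 0$ (entrywise) that the same identity is inherited by $\pi_\mathfrak{I}(a)$, and hence $\mathrm{ad}_{\pi_\mathfrak{I}(a)}$ is again an inner derivation of the corresponding finite-dimensional classical Lie algebra $\mathfrak{g}_\mathfrak{I}$.

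Putting these together gives $\Delta_\mathfrak{I}(x) = D(x)$ with $D \in der(\mathfrak{g}_\mathfrak{I})$, so $\Delta_\mathfrak{I}$ is a local derivation on $\mathfrak{g}_\mathfrak{I}$. The only subtle step is the admissibility check for $\pi_\mathfrak{I}(a)$ in the orthogonal and symplectic cases; once one observes that the projection preserves the bilinear relations because $q_1$ and $q_2$ themselves are block-diagonal with respect to a symmetric choice of $\mathfrak{I}$, the argument is routine.
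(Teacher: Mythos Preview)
Your proof is correct and follows essentially the same approach as the paper's: obtain $a_x$ from the description of $\mathrm{der}(\mathfrak{g})$, apply Lemma~\ref{123} to rewrite $\Delta_\mathfrak{I}(x)=[\pi_\mathfrak{I}(a_x),x]$, and conclude that $\Delta_\mathfrak{I}$ is a local derivation. The paper stops at that identity, while you additionally justify that $\mathrm{ad}_{\pi_\mathfrak{I}(a)}$ really is a derivation of $\mathfrak{g}_\mathfrak{I}$ (including the symmetry of $\mathfrak{I}$ under $i\mapsto -i$ in the $\mathfrak{o}/\mathfrak{sp}$ cases)---a point the paper leaves implicit.
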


\begin{proof} Let $x\in \mathfrak{g}_\mathfrak{I}$ be an arbitrary element. By \eqref{dersplit} there is an  element  $a_x\in M_\mathfrak{J}(\mathbb{F})_{rc-fin}$ such that $\left[a_x, \mathfrak{g}\right]\subseteq \mathfrak{g}$ and
$
\Delta(x)=[a_x,x].
$
Then by Lemma \ref{123}
\begin{equation*}
\Delta_\mathfrak{I}(x)=\pi_\mathfrak{I}(\Delta(x))=\pi_\mathfrak{I}([a_x,x])=
                         [\pi_\mathfrak{I}(a_x),\pi_\mathfrak{I}(x)]=[\pi_\mathfrak{I}(a_x),x].
\end{equation*}
Thus $\Delta_\mathfrak{I}$ is a local derivation. \end{proof}

{\it Proof of Theorem \ref{th21}.} Let  $\Delta$ be a local derivation $\mathfrak{g}$ and let $x\in \mathfrak{g}$ be an arbitrary element. Take a  finite subset $\mathfrak{I}$ in $\mathfrak{J}$ such that $x, y, \Delta(x), \Delta(y), \Delta([x,y])\in\mathfrak{g}_\mathfrak{I}.$
 By Lemma~\ref{resder}, $\Delta_\mathfrak{I}$ is a local derivation of $\mathfrak{g}_\mathfrak{I}.$ Since $\mathfrak{g}_\mathfrak{I}$ is a finite dimensional simple Lie algebra, by \cite[Theorem 3.1]{Ayupov7} $\Delta_\mathfrak{I}$ is a derivation. Hence,
\begin{equation*}\begin{split}
\Delta([x,y]) &=\Delta_\mathfrak{I}([x,y])=[\Delta_\mathfrak{I}(x),y]+[x,\Delta_\mathfrak{I}(y)]=[\Delta(x),y]+[x,\Delta(y)].
\end{split}
\end{equation*}
This means that $\Delta$ is a derivation.

\subsection{2-local derivations on the  locally finite split simple Lie algebras}

\

In this subsection we study 2-local derivations on the locally finite split simple Lie algebras.

Recall that a bilinear from $\kappa$ on $\mathfrak{g}$ is said to be  non degenerate, i.e. $\kappa(x, y)=0$ for
all $y\in \mathfrak{g}$ implies that $x=0.$

We shall use the following results from \cite{Neeb2005}.

\begin{proposition}
There exists a nondegenerate invariant symmetric bilinear
form $\kappa$ on $\mathfrak{g}.$
\end{proposition}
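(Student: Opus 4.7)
The plan is to take the canonical trace form
\[
\kappa(x,y) := \mathrm{tr}(xy),
\]
inherited from the matrix realization of $\mathfrak{g}$ (inside $\mathfrak{gl}_{\mathfrak{J}}(\mathbb{F})$ for $\mathfrak{sl}_{\mathfrak{J}}(\mathbb{F})$, and inside $\mathfrak{gl}_{2\mathfrak{J}}(\mathbb{F})$ for $\mathfrak{o}_{\mathfrak{J},\mathfrak{J}}(\mathbb{F})$ and $\mathfrak{sp}_{\mathfrak{J}}(\mathbb{F})$). Because every element of $\mathfrak{g}$ has only finitely many non-zero entries, the product $xy$ is of finite rank, so $\mathrm{tr}(xy)$ is a well-defined finite sum. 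Bilinearity is immediate, symmetry is the standard identity $\mathrm{tr}(xy)=\mathrm{tr}(yx)$ for finite-rank matrices, and invariance $\kappa([x,y],z)=\kappa(x,[y,z])$ follows from the cyclic manipulation $\mathrm{tr}((xy-yx)z)=\mathrm{tr}(x(yz-zy))$. None of these three properties uses anything beyond the inclusion of $\mathfrak{g}$ into the ambient finitary matrix algebra.

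The substantive point is non-degeneracy, which I would obtain by transfer from the finite-dimensional situation. By the definition recalled in Section~2, $\mathfrak{g}=\lim\limits_{\longrightarrow}\mathfrak{g}_\alpha$ with each $\mathfrak{g}_\alpha$ a finite-dimensional simple Lie subalgebra of $\mathfrak{g}$. Given $x\in\mathfrak{g}$, choose an index $\alpha$ with $x\in\mathfrak{g}_\alpha$; if $\kappa(x,y)=0$ for every $y\in\mathfrak{g}$, then in particular $\kappa(x,y)=0$ for every $y\in\mathfrak{g}_\alpha$. The restriction $\kappa|_{\mathfrak{g}_\alpha}$ is nothing but the trace form of a faithful finite-dimensional matrix representation of a simple Lie algebra, and hence, by Cartan's criterion (equivalently, because any non-zero invariant symmetric bilinear form on a simple Lie algebra is a non-zero scalar multiple of the Killing form), it is non-degenerate. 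Therefore $x=0$.

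The main obstacle is the bookkeeping showing that each of the three classical models $\mathfrak{sl}_{\mathfrak{J}}(\mathbb{F})$, $\mathfrak{o}_{\mathfrak{J},\mathfrak{J}}(\mathbb{F})$, $\mathfrak{sp}_{\mathfrak{J}}(\mathbb{F})$ is genuinely a directed union of finite-dimensional simple subalgebras of the corresponding classical type, and that the restriction of $\kappa$ to each $\mathfrak{g}_\alpha$ coincides with the ordinary trace form on that classical Lie algebra. Both facts are standard consequences of the explicit construction in \cite{Neeb2001} (exhausting the index set $\mathfrak{J}$ by finite subsets and taking the corresponding finite-rank matrices); once these structural facts are in hand, the non-degeneracy statement reduces directly to the well-known finite-dimensional case, and existence of the desired form $\kappa$ follows.
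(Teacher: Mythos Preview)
Your argument is correct: the trace form $\kappa(x,y)=\mathrm{tr}(xy)$ is well defined on finitary matrices, is symmetric and invariant for the reasons you state, and non-degeneracy follows because any $x\in\mathfrak{g}$ lies in some finite-dimensional simple subalgebra $\mathfrak{g}_\alpha$, where the restriction of $\kappa$ is the trace form of a faithful finite-dimensional representation and hence (by Cartan's criterion, in characteristic zero) non-degenerate. Your remark about the ``main obstacle'' is slightly overcautious: you do not need the $\mathfrak{g}_\alpha$ to be of classical type. Since $\mathfrak{g}_\alpha$ is finite-dimensional, its finitely many basis vectors are supported on a common finite subset $\mathfrak{I}\subset\mathfrak{J}$ (or $2\mathfrak{J}$), so $\mathfrak{g}_\alpha\subset M_{\mathfrak{I}}(\mathbb{F})$ already gives the faithful finite-dimensional representation you need, and $\kappa|_{\mathfrak{g}_\alpha}$ is exactly its trace form.

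The paper, however, does not prove this proposition at all: it is quoted verbatim from \cite{Neeb2005} (see the sentence ``We shall use the following results from \cite{Neeb2005}'' preceding it). So your route is not an alternative to the paper's proof but rather a self-contained substitute for an external citation; what you gain is independence from \cite{Neeb2005}, at the cost of a short direct argument that the paper chose to outsource.
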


\begin{proposition}
	Every invariant symmetric bilinear form $\kappa$ on $\mathfrak{g}$ is invariant under all derivations of $\mathfrak{g}.$
\end{proposition}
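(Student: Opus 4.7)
The plan is to exploit the explicit description of derivations recalled in Section 2: every derivation $D$ of $\mathfrak{g}$ has the form $D(z)=[a,z]$ for some $a\in M_{\mathfrak{J}}(\mathbb{F})_{rc-fin}$ (satisfying $a^{\top}q_{i}+q_{i}a=0$ in the orthogonal and symplectic cases). The statement thus reduces to verifying
\[
\kappa([a,x],y)+\kappa(x,[a,y])=0\qquad\text{for all } x,y\in\mathfrak{g}.
\]
If $a$ itself belonged to $\mathfrak{g}$ this would be precisely the invariance hypothesis on $\kappa$; the difficulty is that in general $a$ only lies in a larger matrix algebra, so the core step is to replace $a$ by an element of $\mathfrak{g}$ that induces the same action on the prescribed $x$ and $y$.

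To build such a replacement, fix $x,y\in\mathfrak{g}$ and note that $[a,x]$ and $[a,y]$ have finite matrix support (they already lie in $\mathfrak{g}$). Choose a finite subset $\mathfrak{I}\subset\mathfrak{J}$---taken symmetric under $i\mapsto -i$ when $\mathfrak{g}\in\{\mathfrak{o}_{\mathfrak{J},\mathfrak{J}}(\mathbb{F}),\mathfrak{sp}_{\mathfrak{J}}(\mathbb{F})\}$---such that $x,y,[a,x],[a,y]\in\mathfrak{g}_{\mathfrak{I}}$. Splitting $a=\pi_{\mathfrak{I}}(a)+\bigl(a-\pi_{\mathfrak{I}}(a)\bigr)$ and tracking matrix supports, one sees that for any $z$ supported in $\mathfrak{I}\times\mathfrak{I}$ the commutator $[\pi_{\mathfrak{I}}(a),z]$ stays supported in $\mathfrak{I}\times\mathfrak{I}$ while $[a-\pi_{\mathfrak{I}}(a),z]$ has its support disjoint from $\mathfrak{I}\times\mathfrak{I}$. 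Since $[a,z]\in\mathfrak{g}_{\mathfrak{I}}$ for $z=x,y$ by choice of $\mathfrak{I}$, the off-block component must vanish, giving $[a,x]=[\pi_{\mathfrak{I}}(a),x]$ and $[a,y]=[\pi_{\mathfrak{I}}(a),y]$.

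It remains to promote $\pi_{\mathfrak{I}}(a)$ to an element of $\mathfrak{g}$. For $\mathfrak{g}=\mathfrak{sl}_{\mathfrak{J}}(\mathbb{F})$ I would subtract the scalar correction $\tfrac{1}{|\mathfrak{I}|}\operatorname{tr}\bigl(\pi_{\mathfrak{I}}(a)\bigr)\sum_{i\in\mathfrak{I}}e_{i,i}$, which commutes with every element of $\mathfrak{g}_{\mathfrak{I}}$; the resulting traceless matrix $a'$ lies in $\mathfrak{sl}_{\mathfrak{I}}\subseteq\mathfrak{g}$ and still satisfies $[a',x]=[a,x]$ and $[a',y]=[a,y]$. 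In the orthogonal and symplectic cases the symmetric choice of $\mathfrak{I}$ ensures that $\pi_{\mathfrak{I}}(a)$ inherits the defining relation $\pi_{\mathfrak{I}}(a)^{\top}q_{i}+q_{i}\pi_{\mathfrak{I}}(a)=0$ from $a$, so one may take $a':=\pi_{\mathfrak{I}}(a)\in\mathfrak{g}$ directly. Invariance of $\kappa$ on $\mathfrak{g}$ then closes the argument:
\[
\kappa([a,x],y)+\kappa(x,[a,y])=\kappa([a',x],y)+\kappa(x,[a',y])=0.
\]

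The main technical obstacle is precisely this last step: uniformly across the three families $\mathfrak{sl},\mathfrak{o},\mathfrak{sp}$ one has to verify that $a$ may be truncated to an element of $\mathfrak{g}$ without altering its action on $x$ and $y$. This is also the only place where the distinction between the three structural types really enters the proof; once it is dealt with, invariance of $\kappa$ on $\mathfrak{g}$ does all the remaining work.
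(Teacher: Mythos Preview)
The paper does not actually prove this proposition: it is quoted, together with the preceding proposition, from \cite{Neeb2005} and is used as a black box in the proof of Theorem~3.5. So there is no in-paper argument to compare against.

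Your argument is correct and is in fact the natural proof in the present setting. The support analysis you sketch for $[a-\pi_{\mathfrak{I}}(a),z]$ is precisely the content of Lemma~\ref{123} combined with the choice of $\mathfrak{I}$ so that $[a,x],[a,y]\in\mathfrak{g}_{\mathfrak{I}}$; together these give $[a,x]=[\pi_{\mathfrak{I}}(a),x]$ and $[a,y]=[\pi_{\mathfrak{I}}(a),y]$. The trace correction in the $\mathfrak{sl}$ case and the symmetry of $\mathfrak{I}$ under $i\mapsto -i$ in the $\mathfrak{o}/\mathfrak{sp}$ cases are exactly what is needed to land $a'$ in $\mathfrak{g}$, after which the assumed $\mathfrak{g}$-invariance of $\kappa$ finishes the job. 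One small point worth making explicit: in the orthogonal and symplectic cases the index set is $2\mathfrak{J}=\mathfrak{J}\,\dot\cup\,(-\mathfrak{J})$, so the ``finite $\mathfrak{I}$'' you choose must really be a finite symmetric subset of $2\mathfrak{J}$ to ensure both that $q_i$ restricts to $\mathfrak{I}$ and that $\pi_{\mathfrak{I}}$ preserves the defining relation; you state this, but it is the one place where a careless reader could stumble.
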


The main results of this subsection is the following.

\begin{theorem}\label{th32}
Let $\mathfrak{g}$ be a locally finite split simple Lie algebras over a field of characteristic zero. Then any
2-local derivation on $\mathfrak{g}$ is a derivation.
\end{theorem}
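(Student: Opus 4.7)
The plan is to reduce the statement to the finite-dimensional case by restricting everything to symmetric finite subsets $\mathfrak{I} \subset \mathfrak{J}$, and then invoking the known result \cite{AyuKudRak} that every 2-local derivation on a finite-dimensional semisimple Lie algebra (over a field of characteristic zero) is a derivation. The reduction parallels the one carried out for local derivations in Theorem~\ref{th21}, but an extra step is needed because a 2-local derivation $\nabla$ is not a priori linear.

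The first step is a 2-local analogue of Lemma~\ref{resder}: for every finite subset $\mathfrak{I} \subset \mathfrak{J}$ (chosen to be ``symmetric'', i.e.\ $\mathfrak{I} = -\mathfrak{I}$, in the $\mathfrak{o}$ and $\mathfrak{sp}$ cases), the map $\nabla_\mathfrak{I}(x) := \pi_\mathfrak{I}(\nabla(x))$ on $\mathfrak{g}_\mathfrak{I}$ is a 2-local derivation of $\mathfrak{g}_\mathfrak{I}$. Indeed, given $x, y \in \mathfrak{g}_\mathfrak{I}$, pick $a = a_{x,y} \in M_\mathfrak{J}(\mathbb{F})_{rc\text{-}fin}$ (satisfying the appropriate $q_i$-skew-symmetry where relevant) with $\nabla(x) = [a,x]$ and $\nabla(y) = [a,y]$. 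Then Lemma~\ref{123} yields
\begin{equation*}
\nabla_\mathfrak{I}(x) = \pi_\mathfrak{I}([a,x]) = [\pi_\mathfrak{I}(a), x], \qquad \nabla_\mathfrak{I}(y) = [\pi_\mathfrak{I}(a), y],
\end{equation*}
and one checks that the truncation $\pi_\mathfrak{I}(a)$ retains the defining property ($\pi_\mathfrak{I}(a)^\top q_i + q_i \pi_\mathfrak{I}(a) = 0$ when $\mathfrak{I}$ is symmetric), so $[\pi_\mathfrak{I}(a), \cdot]$ is a derivation of the finite-dimensional simple algebra $\mathfrak{g}_\mathfrak{I}$. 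Applying \cite[Main Theorem]{AyuKudRak} then upgrades $\nabla_\mathfrak{I}$ to an honest derivation of $\mathfrak{g}_\mathfrak{I}$.

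The second step pulls the conclusion back to $\mathfrak{g}$. Fix $x, y \in \mathfrak{g}$ and scalars $\alpha, \beta \in \mathbb{F}$. Since each of the finitely many elements
\begin{equation*}
x,\ y,\ \alpha x + \beta y,\ [x,y],\ \nabla(x),\ \nabla(y),\ \nabla(\alpha x + \beta y),\ \nabla([x,y])
\end{equation*}
belongs to $\mathfrak{g}$ and hence has finite support, we may choose a finite symmetric $\mathfrak{I} \subset \mathfrak{J}$ containing all their supports. Then $\pi_\mathfrak{I}$ acts as the identity on each of them, so $\nabla_\mathfrak{I}$ and $\nabla$ agree on $x$, $y$, $\alpha x + \beta y$, and $[x,y]$. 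The linearity and Leibniz rule for the derivation $\nabla_\mathfrak{I}$ then translate directly into
\begin{equation*}
\nabla(\alpha x + \beta y) = \alpha \nabla(x) + \beta \nabla(y), \qquad \nabla([x,y]) = [\nabla(x), y] + [x, \nabla(y)].
\end{equation*}
Since $x, y, \alpha, \beta$ were arbitrary, $\nabla$ is a (linear) derivation of $\mathfrak{g}$.

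The main technical obstacle I anticipate is the verification that $\pi_\mathfrak{I}(a)$ inherits the $q_i$-skew-symmetry condition in the orthogonal and symplectic cases, which forces one to work with symmetric finite subsets $\mathfrak{I}$ and to check compatibility with the description of $\mathrm{der}(\mathfrak{g})$ given at the end of Section~2; beyond this the argument is a routine reduction once the 2-local derivation is restricted to each $\mathfrak{g}_\mathfrak{I}$.
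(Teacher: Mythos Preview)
Your argument is correct and complete: the restriction $\nabla_\mathfrak{I}=\pi_\mathfrak{I}\circ\nabla|_{\mathfrak{g}_\mathfrak{I}}$ is a 2-local derivation of the finite-dimensional simple algebra $\mathfrak{g}_\mathfrak{I}$ by exactly the same use of Lemma~\ref{123} as in Lemma~\ref{resder}, and the verification that $\pi_\mathfrak{I}(a)$ keeps the $q_i$-skew-symmetry on a symmetric $\mathfrak{I}$ reduces to the entrywise condition $a_{-k,i}+a_{-i,k}=0$, which is clearly preserved under truncation. Invoking \cite{AyuKudRak} on each $\mathfrak{g}_\mathfrak{I}$ then gives linearity and the Leibniz identity simultaneously, and your choice of $\mathfrak{I}$ large enough to contain all relevant supports lets you read these identities back in $\mathfrak{g}$.

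The paper proceeds along a different line. Rather than restricting $\nabla$ to finite pieces, it exploits the existence of a nondegenerate invariant symmetric bilinear form $\kappa$ on $\mathfrak{g}$ (from \cite{Neeb2005}), together with the fact that $\kappa$ is invariant under every derivation. The computation $\kappa(\nabla(x+y),z)=-\kappa(x+y,\nabla(z))=\kappa(\nabla(x)+\nabla(y),z)$, valid because each appearance of $\nabla$ can be replaced by the appropriate derivation $D_{\cdot,z}$, yields additivity of $\nabla$; homogeneity is immediate. Thus $\nabla$ is a local derivation, and the paper then quotes its own Theorem~\ref{th21}. In effect, the paper factors the problem as ``2-local $\Rightarrow$ linear $\Rightarrow$ local $\Rightarrow$ derivation'', using the bilinear form for the first arrow and the finite-dimensional local result \cite{Ayupov7} for the last; you instead go ``2-local on $\mathfrak{g}$ $\Rightarrow$ 2-local on $\mathfrak{g}_\mathfrak{I}$ $\Rightarrow$ derivation on $\mathfrak{g}_\mathfrak{I}$'', using \cite{AyuKudRak} and extracting linearity and Leibniz in a single step. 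Your route avoids any appeal to the invariant form and mirrors the proof of Theorem~\ref{th21} more closely, at the cost of needing the (somewhat deeper) finite-dimensional 2-local theorem rather than the local one.
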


\begin{proof}
 Let us first to show that $\nabla$ is  linear.
 
 Let $x,y,z\in \mathfrak{g}$ be arbitrary elements. Taking into account invariance of $\kappa$  we obtain
\begin{equation*}\begin{split}
\kappa(\nabla(x+y),z)&=\kappa(D_{x+y,z}(x+y),z)=-\kappa(x+y,D_{x+y,z}(z))\\
&=-\kappa(x+y,\nabla(z))=-\kappa(x,\nabla(z))-\kappa(y,\nabla(z))\\
&=-\kappa(x,D_{x,z}(z))-k(y,D_{y,z}(z))=\kappa(D_{x,z}(x),z)\\
&+\kappa(D_{y,z},z)=\kappa(\nabla(x),z)+\kappa(\nabla(y),z)\\
&=\kappa(\nabla(x)+\nabla(y),z),
\end{split}
\end{equation*}
i.e.
\begin{equation*}
\kappa(\nabla(x+y),z)=\kappa(\nabla(x)+\nabla(y),z)
\end{equation*}
Since $\kappa$  is non-degenerate the last equality implies that

\begin{equation*}
\nabla(x+y)=\nabla(x)+\nabla(y)\ \ \  \text{for}\  x,y\in\mathfrak{g}.
\end{equation*}
Further,
\begin{equation*}
\nabla(\lambda x)=D_{\lambda x,x}(\lambda x)= \lambda D_{\lambda x,x}(x)=\lambda\nabla(x).
\end{equation*}
Hence, $\nabla$ is linear, and therefore is a local derivation.  

Finally, by Theorem \ref{th21} a local derivation $\nabla$ is a derivation.
\end{proof}


\begin{thebibliography}{99}

\bibitem{AyuKud} Ayupov Sh.A., Kudaybergenov K.K., {\it 2-Local automorphisms on finite dimensional Lie algebras,} Linear Algebra and its
 Applications, \textbf{507}, 121--131 (2016).

 \bibitem{Ayupov7} Ayupov Sh.A., Kudaybergenov K.K., {\it Local derivation on finite dimensional Lie algebras,} Linear Algebra and its
 Applications, \textbf{493}, 381--398 (2016).

\bibitem{AyuKudRak}Ayupov Sh.A., Kudaybergenov K.K., Rakhimov I.S., {\it 2-Local derivations on finite-dimensional Lie algebras,} Linear Algebra and its Applications, \textbf{474}, 1--11 (2015).


\bibitem{AyuKudYus}Ayupov Sh.A., Kudaybergenov K.K., Yusupov B.B., {\it 2-Local derivations on generalized Witt algebras,} Linear and Multilinear Algebra, (2019) DOI: 10.1080/03081087.2019.1708846.

\bibitem{Ayupov6} Ayupov Sh.A.,  Khudoyberdiyev A.Kh., {\it Local derivations on solvable Lie algebras,} Linear and Multilinear Algebra,
DOI:10.1080/03081087.2019.1626336.


\bibitem{Ayupov8}  Ayupov Sh.A., Yusupov B.B., {\it 2-local derivations of infinite-dimensional Lie algebras,} Journal of Algebra and its Aplications,  
DOI:10.1142/S0219498820501005.

\bibitem{AyuYus}  Ayupov Sh.A., Yusupov B.B., {\it 2-local derivations on Virasoro algebras,} Bulletin of National University of Uzbekistan: Mathematics and Natural Sciences,  {\bf 2}:4 217--230 (2019).

\bibitem{YangKai} Chen Y., Zhao K.,  Zhao Y., {\it Local derivations on Witt algebras,} arXiv:1911.05.15v1 [math.RA] 12 Nov (2019).

\bibitem{ChenWang} Chen Z., Wang D., {\it 2-Local automorphisms of finite-dimensional simple Lie algebras,}  Linear Algebra and its Applications,  \textbf{486},  335--344 (2015).


\bibitem{Neeb2005} Nebb K.--H., {\it Derivations of locally simple Lie Algebras,} Journal of Lie Theory,  \textbf{15},  589-594 (2005).

\bibitem{Neeb2001} Neeb K.--H.,  Stumme N., {\it The classification of locally finite split simple Lie algebras,} J. Reine Angew. Math. \textbf{533}, 25–-53 (2001).

\bibitem{Sem} \v{S}emrl P., {\it Local automorphisms and derivations on $B(H),$} Proceedings of the American Mathematical Society, \textbf{125}, 2677--2680 (1997).


\bibitem{Kadison} Kadison R.V,  {\it Local derivations,}
Journal of Algebra,  \textbf{130},  494--509 (1990).

\bibitem{Larson} Larson D.R., Sourour A.R.,  {\it Local derivations and local automorphisms of $B(X)$,}
Proceedings of Symposia in Pure Mathematics, \textbf{51} Part 2, Provodence, Rhode Island, p. 187--194 (1990).

\end{thebibliography}
\end{document}